\newcommand{\erre} {{\mathbb {R}}}
\def\erren{{\erre^{ {N} }}}
\def\erreu{{\erre^{ {N+1} }}}
\def\effe{\mathscr{F}}
\def\av{``}
\def\cv{''}
\def\inn{\mbox{ in }}
\def\andd{ \quad\mbox{ and } \quad }
\def\pim{\mathcal{P}_m}
\newcommand{\tende}{\rightarrow}
\newcommand{\ttende}{\longrightarrow}
\newcommand{\enne} {\mathbb{N}}
\newcommand{\frecciaf} {\longmapsto}
\newcommand{\meno} {\smallsetminus}
\def\C {{\mathscr{C}}}
\def\subc {\underline{\mathscr{C}}(\Omega)}
\def\supc {\overline{\mathscr{C}}(\Omega)}
\newtheorem{theorem}{Theorem}[section]
\newtheorem{corollary}[theorem]{Corollary}
\theoremstyle{remark}
\newtheorem{remark}[theorem]{Remark}
\theoremstyle{definition}
\theoremstyle{remark}
\numberwithin{equation}{section}
\title[On the Perron solution of the caloric Dirichlet problem]{On the Perron solution \\of the caloric Dirichlet problem: \\ an elementary approach}
\author{Alessia E. Kogoj}
\address{Dipartimento di Scienze Pure e Applicate (DiSPeA)\\ 
				 Universit\`{a} degli Studi di Urbino Carlo Bo\\
				 Piazza della Repubblica 13, 61029 Urbino (PU), Italy.}
\email{alessia.kogoj@uniurb.it}
\author{Ermanno Lanconelli}
\address{Dipartimento di Matematica\\ 
				 Alma Mater Studiorum Universit\`{a} di Bologna\\
				 Piazza di Porta San Donato 5, 40126 Bologna, Italy.}
\email{ermanno.lanconelli@uniurb.it}
\subjclass[2010]{35K05; 35K20}
\keywords{Heat equation, caloric Dirichlet problem, Perron solution.}
\begin{document} 

 \dedicatory{\flushright{ \av\  I like easy tricks.}\cv \\ \rm Louis Nirenberg }

\begin{abstract} By an easy trick taken from caloric polynomial theory we construct a family $\mathscr{B}$ of {\it almost regular} domains for the caloric Dirichlet problem. $\mathscr{B}$ is a basis of the Euclidean topology. This allows to build, with a basically elementary procedure, the Perron solution to the caloric  Dirichlet problem on every bounded domain.  
\end{abstract}
  
\maketitle

\section{Introduction and key theorem}
As it is very well known today, the construction of the Perron solution to the Dirichlet problem for the heat equation on a general bounded domain only rests upon three basic principles: the {\it caloric maximum principle}; the {\it convergence principle}, i.e., the closure of the sheaf of the caloric functions with respect to the uniform convergence; the  {\it solvability principle}, i.e., the solvability of the caloric first boundary value problem on the open sets of a basis of the Euclidean topology.

The first one of this principles is very elementary, and the second one is a simple consequence of some good properties of the Gauss--Weierstrass kernel, the fundamental solution of the heat equation. On the contrary, the proof of the solvability principle has been always considered in literature a difficult task, requiring Volterra integral equation theory, or double layer potential method, or an involved procedure based on a reflection principle (see e.g., respectively, Bauer \cite[Chapter 1, Section 2]{bauer}, 
Watson \cite[Chapter 2, Section 2.2]{W}, Constantinescu and Cornea \cite[Chapter 3, Section 3.3]{CC}).

The aim of this note is to draw attention to an easy trick - borrowed from caloric polynomial theory - allowing to construct with a very elementary procedure, a basis of open sets on which the first boundary value problem for the heat equation is solvable. The crucial point of this procedure is Theorem \ref{main} below, in which $w$ denotes the polynomial $$w(z)=w(x,t):= t -|x|^2.$$
Here and in what follows $$z=(x,t), \quad x\in \erren,\quad  t\in \erre,$$ denotes the point of $\erreu$; $|x|$ is the Euclidean norm of $x$.

We will use the notation $H$ to denote the {\it heat operator} 
$$H:=\varDelta -\partial_t,$$
where $\varDelta:=\sum_{j=1}^N \partial^2_{x_j}$ is the Laplacian in $\erren$. We call {\it caloric} the smooth functions solutions to $Hu=0$.  If $\Omega\subseteq\erreu$ is open, we will denote by $\C(\Omega)$ the linear space of the caloric functions in $\Omega$. 

To our aim it is convenient to fix some more notations. If $\alpha=(\alpha_1,\ldots, \alpha_{N},\alpha_{N+1}) $ is a multi-index with non negative integer components, we let $$|\alpha|_c = \mbox{\it caloric hight of }\alpha:=\alpha_1 + \ldots + \alpha_{N}+ 2 \alpha_{N+1}$$.

A polynomial in $\erreu$ is a function of the kind 

$$p(z)=\sum_{|\alpha|_c\le m} a_\alpha z^\alpha,\quad a_\alpha \in \erre \mbox{\ for every }\alpha,$$
where $m\in\mathbb{Z},\  m\geq0$. In this case we say that $p$ has caloric degree $\le m$; we will say that  $p$ has caloric degree equal to $m$ if $\sum_{|\alpha|_c= m} a_\alpha z^\alpha$ is not identically zero. 

Here is the key theorem of our note.

\begin{theorem}\label{main} Le $p$ be a polynomial in $\erreu$. Then, there exists a unique polynomial $q$ in $\erreu$ such that  \begin{eqnarray}\label{maineq} H(wq)=-Hp\end{eqnarray}
\end{theorem}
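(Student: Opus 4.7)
My plan is to study the linear map $T$ sending a polynomial $q$ on $\erreu$ to $H(wq)$, and to show that $T$ is bijective on the space of all polynomials with $-Hp$ in its image. Since the equation $H(wq) = -Hp$ is linear in $q$, uniqueness will follow from injectivity of $T$, and existence from a dimension count on a suitable finite-dimensional subspace.

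First I would compute, via the Leibniz rule together with $\Delta w = -2N$, $\nabla w = -2x$, $\partial_t w = 1$,
\begin{equation*}
H(wq) \;=\; w\, Hq \;-\; (2N+1)\, q \;-\; 4\, x\cdot\nabla_x q.
\end{equation*}
Let $\pim$ denote the finite-dimensional space of polynomials on $\erreu$ of caloric degree $\le m$. The operator $H$ lowers caloric degree by $2$, multiplication by $w$ raises it by $2$, and the Euler-type operator $x\cdot\nabla_x$ preserves caloric-homogeneous degree; reading the three summands above, $T$ therefore maps $\pim$ into itself. Since $-Hp \in \mathcal{P}_{m-2} \subseteq \pim$ whenever $p \in \pim$, it suffices to establish that $T|_{\pim}$ is injective: bijectivity then follows by finite-dimensionality and produces the desired $q \in \pim$.

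The heart of the matter is injectivity, and here I would invoke the caloric maximum principle on the parabolic cap
\begin{equation*}
U_R \;:=\; \{(x,t) \in \erreu : 0 < t < R,\ |x|^2 < t\},
\end{equation*}
a bounded open set for any $R > 0$. Its parabolic boundary is the piece of paraboloid $\{(x,t) : t = |x|^2,\ 0 \le t \le R\}$, on which $w$ vanishes identically. So if $Tq = H(wq) = 0$, the caloric function $wq$ is continuous on $\overline{U_R}$ and zero on the parabolic boundary; applying the caloric maximum principle to $\pm wq$ forces $wq \equiv 0$ in $U_R$, whence $q \equiv 0$ on the open set $U_R$, and, being a polynomial, $q \equiv 0$ identically. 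The same argument applies to any polynomial $q$ (not only those in $\pim$), so it simultaneously yields the uniqueness assertion of the theorem.

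The one subtlety I anticipate is justifying the maximum principle on $U_R$, which has a cuspidal corner at the origin rather than smooth or cylindrical geometry; this is classical but may need an exhaustion of $U_R$ by parabolic cylinders $B_\rho(x_0) \times (t_0,t_1)$ contained in $\{|x|^2 < t\}$, on which the standard principle applies immediately, together with a continuity argument near the vertex.
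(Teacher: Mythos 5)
Your proposal is correct and follows the same overall strategy as the paper: reduce to showing the linear map $T(q)=H(wq)$ is injective on the finite-dimensional space $\pim$, then invoke the caloric maximum principle on the paraboloidal region to force $wq\equiv 0$. There are two small differences worth recording. First, you compute $H(wq)=wHq-(2N+1)q-4\,x\cdot\nabla_x q$ via Leibniz and read off the degree count term by term; the paper gets the same stability $T(\pim)\subseteq\pim$ more tersely from the single observation that $w$ has caloric degree $2$ (multiplication by $w$ raises caloric degree by $2$, $H$ lowers it by $2$). Your explicit formula is not needed but is harmless. Second, and more substantively, you run the maximum principle on the \emph{bounded} cap $U_R=\{0<t<R,\ |x|^2<t\}$, whereas the paper applies it on the full unbounded region $P=\{t>|x|^2\}$. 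Your choice is actually the more elementary one and fits the spirit of the note better: on $U_R$ the classical maximum principle for bounded open sets applies without any growth condition, while on $P$ one must implicitly appeal to a Phragm\'en--Lindel\"of-type statement or note that $wq$ is a polynomial and hence of controlled growth. Your final worry about the cuspidal vertex at the origin is unfounded and the proposed exhaustion by cylinders is unnecessary: the caloric maximum principle for a bounded open set (such as \cite[Theorem 8.2]{BB}, cited in the paper) requires no regularity of $\partial U_R$ whatsoever, only that the function be caloric in $U_R$ and continuous on $\overline{U_R}$, which is automatic for the polynomial $wq$; the parabolic boundary of $U_R$ is exactly the paraboloidal piece $\{t=|x|^2,\ 0\le t\le R\}$ on which $w$ vanishes, and that is all one needs.
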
 
\begin{proof} Let us denote by $m$ the caloric degree of  $-Hp$ and by $\pim$ the linear space of the polynomials in $\erreu$ having caloric degree less than or equal to $m$. Since $w$ has caloric degree two, then  $H(wq)\in\pim$ if $q\in\pim$; therefore 
$$q\frecciaf T(q):=H(wq)$$
maps $\pim$ in $\pim$. To prove that \eqref{maineq} has a unique solution $q$ it is (more than) enough to show that $T$ is surgetive and injective.  To this end, since $\pim$ is a linear space of finite dimension and $T$ linearly maps $\pim$ in $\pim$, we only have to prove that $T$ is injective.  Let $q\in\pim$ be such that $T(q)=0$. Then, $$u:=wq$$ is caloric in $\erreu$ hence, in particular, in the open region 

\begin{eqnarray}\label{region} P:=\{(x,t)\in\erreu \ :\ t > |x|^2 \}. \end{eqnarray}
Moreover, since $w=0$ on 

\begin{eqnarray}\label{bordo} \partial P=\{(x,t)\in\erreu \ :\ t = |x|^2 \}, \end{eqnarray}
$u|_{\partial P}=wq|_{\partial P}=0.$ Then, by the caloric maximum principle (see e.g. \cite[Theorem 8.2]{BB}) $u=0$ in $P$. Since $w>0$ in $P$, this implies $q=0$ in $P$, hence in $\erreu$. We have so proved that $q=0$  if $T(q)=0,$ that is the injectivity of $T$, completing the proof.
\end{proof}

From the previous theorem one immediately obtains the following corollary, in which $\partial P$ is the closed set in \eqref{bordo}, boundary of the region $P$ in \eqref{region}.

\begin{corollary}\label{corollario} Let $p$ be a polynomial in $\erreu$. Then, there exists a unique polynomial $u_p$ in $\erreu$ such that 
\begin{equation} \label{unotre}
\begin{cases}
 H u_p = 0\  \mbox{ in } \erreu,   \\
  u_p = p  \ \mbox{ on } \partial P. \end{cases}
\end{equation}

\end{corollary}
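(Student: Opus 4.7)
The plan is to deduce the corollary directly from Theorem \ref{main}. Given a polynomial $p$ on $\erreu$, I would first apply Theorem \ref{main} to produce the unique polynomial $q$ satisfying $H(wq) = -Hp$, and then simply set
\begin{equation*}
u_p := p + wq.
\end{equation*}
This is a polynomial by construction. It is caloric because $Hu_p = Hp + H(wq) = Hp - Hp = 0$, and on $\partial P$, where $w$ vanishes by \eqref{bordo}, it reduces to $p$. So existence in \eqref{unotre} is an immediate one-line consequence of the key theorem.

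For uniqueness, I would take two polynomial solutions $u_1, u_2$ of \eqref{unotre} and form the difference $v := u_1 - u_2$. Then $v$ is a caloric polynomial on $\erreu$ with $v|_{\partial P}=0$. Repeating verbatim the argument already used in the proof of Theorem \ref{main}, the caloric maximum principle applied in $P$ with zero boundary datum forces $v \equiv 0$ on $P$; since $v$ is a polynomial and $P$ has non-empty interior, $v \equiv 0$ on the whole of $\erreu$.

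I do not expect any real obstacle: the heavy lifting has already been done in Theorem \ref{main}, and both halves of the corollary are essentially corollaries of that theorem together with the same maximum principle invocation used in its proof. The only mild point to watch is that $P$ is unbounded, so one must be slightly careful that the cited version of the caloric maximum principle applies to caloric polynomials on $P$ vanishing on $\partial P$, but this is exactly the setting in which it has already been used successfully a few lines above.
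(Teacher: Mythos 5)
Your proposal is correct and coincides with the paper's own argument: existence via $u_p = p + wq$ with $q$ from Theorem \ref{main}, and uniqueness by applying the caloric maximum principle in $P$ to the difference of two polynomial solutions, which vanishes on $\partial P$ and hence (being a polynomial) vanishes identically. No further comment is needed.
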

\begin{proof} Let $q$ be a polynomial satisfying \eqref{maineq}. Then $u_p=wq+p$ solves \eqref{unotre}. Moreover, if $v$ is any polynomial solving \eqref{unotre}, then $v -u_p$ is caloric in $\erreu$ - hence in $P$ - and $v - u_p=0$ on $\partial P$. The caloric maximum principle implies $v - u_p=0$ in $P$, hence in $\erreu$. Then $v = u_p,$ that is the uniqueness part of the Corollary.
\end{proof}

\section{A proof of the solvability principle:\\ a basis of $H$-almost regular domains}

Let $z_0=(x_0,t_0) \in \erreu$ and let $r>0$. We call:
\begin{itemize}
\item[$(i)$] {\it caloric bowl} of {\it bottom} $z_0$ and  {\it opening} $r$ the open set 
$$B(z_0,r): = \{ (x,t)\in\erreu\ :\ |x-x_0|^2< t-t_0<r^2\};$$
\item[$(ii)$] {\it normal} or {\it caloric boundary} of $B(z_0,r)$ the subset of $\partial B(z_0,r)$
$$\partial_n B(z_0,r): = \{ (x,t)\in\erreu\ :\ |x-x_0|^2= t-t_0, \ 0\le t-t_0\le r^2\}.$$

\end{itemize}
\vspace{0.8cm}
\begin{center}
\includegraphics[width=.8\textwidth]{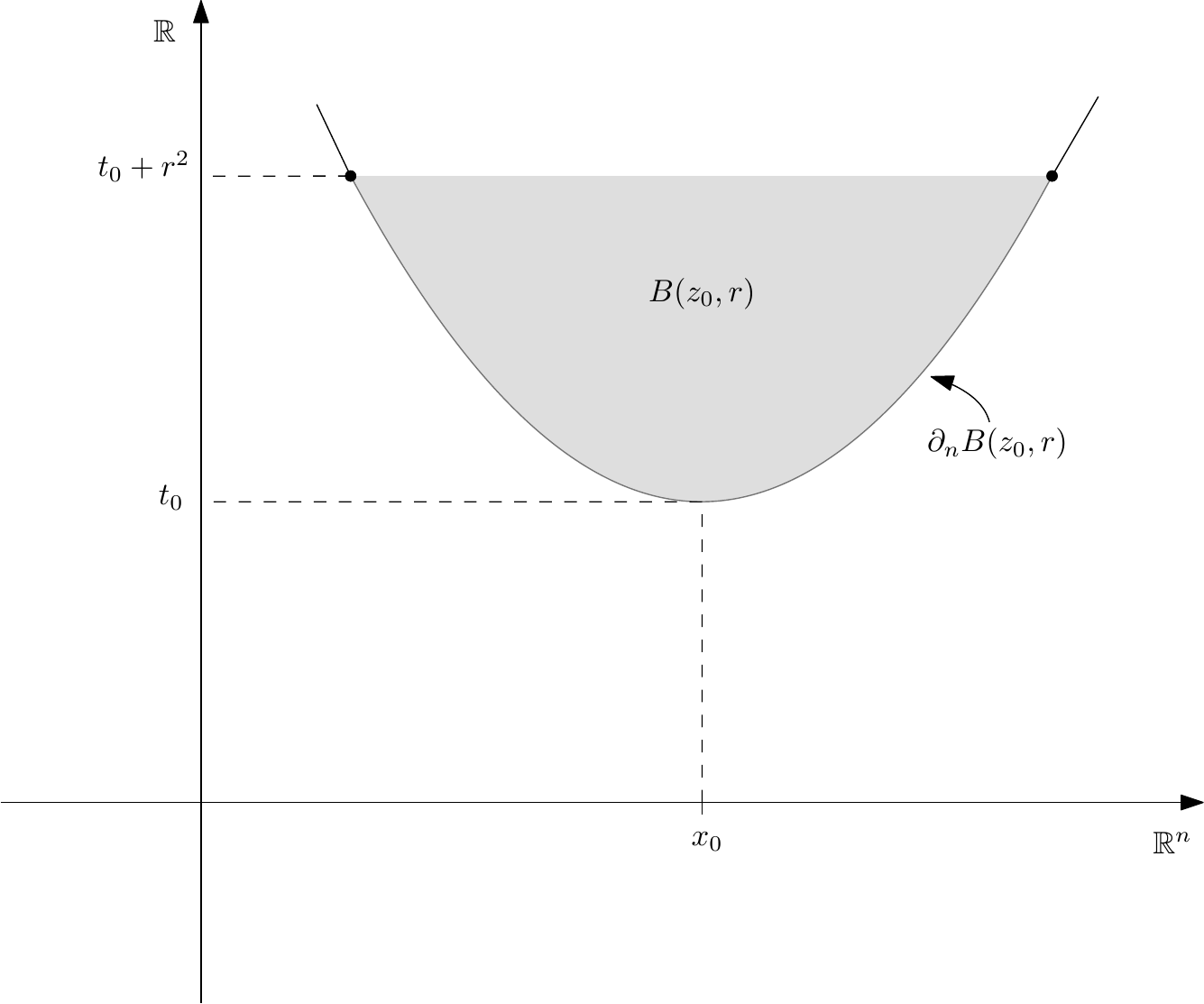}
\end{center}

Obviously, $$\mathscr{B}:=\{ B(z_0,r)\ : \ z_0\in\erreu,\ r>0 \}$$ is a basis of the Euclidean topology.

The aim of this section is to show that every caloric bowl is  {\it $H$-almost regular} in the sense of the following theorem.

\begin{theorem}\label{casamia}
Let $z_0\in\erreu$ and $r>0$ be arbitrarily fixed and let $B=B(z_0,r)$ be the caloric bowl of bottom $z_0$ and opening $r$. Then for every $\varphi \in C(\partial_n B,\erre)$ there exists a unique solution to the boundary value problem 
\begin{equation} \label{dueuno}
\begin{cases}
 H u = 0\  \mbox{ in } B,   \\
  u = \varphi \ \mbox{ on } \partial_n B. \end{cases}
\end{equation}

Precisely: there exists a unique function $u_\varphi^B$ caloric in $B$ and continuous up to $B\cup \partial_n B$ such that 
\begin{equation*} u^B_\varphi(z)=\varphi(z) \mbox{ for every } z\in \partial_n B.\end{equation*}
Moreover $u_\varphi^B \geq 0$ if $\varphi\geq 0$.
\end{theorem}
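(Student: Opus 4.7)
The plan is to exploit Corollary \ref{corollario} together with Stone--Weierstrass and the two standard principles (maximum and convergence) recalled in the introduction. By translation invariance of $H$, I would first reduce to $z_0 = 0$, so that $B = \{(x,t)\in\erreu : |x|^2 < t < r^2\}$ and $\partial_n B$ becomes a compact subset of the paraboloid $\partial P$ from \eqref{bordo}. A key observation is that $\partial_n B$ is exactly the parabolic boundary of $B$: the top disc $\{|x| < r,\ t = r^2\}$ consists of terminal points and is excluded. Uniqueness in \eqref{dueuno} is then an immediate application of the caloric maximum principle to the difference of two solutions.

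For existence, I would approximate $\varphi$ by polynomials. Since $\partial_n B$ is compact, Stone--Weierstrass produces a sequence of polynomials $(p_n)$ on $\erreu$ with $p_n \to \varphi$ uniformly on $\partial_n B$. Corollary \ref{corollario}, applied to each $p_n$, furnishes a polynomial $u_{p_n}$ that is caloric on all of $\erreu$ and agrees with $p_n$ on $\partial P$, and hence on $\partial_n B$.

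The remaining step is a clean limiting argument. Applying the caloric maximum principle to the differences $u_{p_n} - u_{p_m}$, which are caloric on $B$ and continuous on $\overline{B}$, gives
$$\sup_{\overline{B}}|u_{p_n} - u_{p_m}| \;=\; \sup_{\partial_n B}|p_n - p_m| \;\longrightarrow\; 0.$$
Thus $(u_{p_n})$ is Cauchy in $C(\overline{B})$, and its uniform limit $u$ is continuous on $\overline{B}$, caloric on $B$ by the convergence principle, and agrees with $\varphi$ on $\partial_n B$. The non-negativity assertion then follows from one more application of the caloric maximum principle, applied to $-u$ on $B$: if $\varphi \geq 0$, then $\sup_{\overline{B}}(-u) = \sup_{\partial_n B}(-\varphi) \leq 0$, so $u \geq 0$.

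The main obstacle of the theorem is already absorbed into Corollary \ref{corollario}: producing a caloric polynomial extension for arbitrary polynomial data on the paraboloid $\partial P$ is the substantive step, and once it is available what remains is routine. In particular, nothing in this plan requires anything beyond Stone--Weierstrass and the two principles (maximum and convergence) advertised in the introduction --- exactly the elementary footing on which the paper aims to place the solvability principle.
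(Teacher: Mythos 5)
Your proposal is correct and follows essentially the same route as the paper: reduce to $z_0=0$ by translation invariance, approximate $\varphi$ uniformly on $\partial_n B$ by polynomials, lift each to a globally caloric polynomial via Corollary \ref{corollario}, and pass to the uniform limit on $\overline{B}$ using the caloric maximum principle (which also gives uniqueness and positivity) together with the convergence principle.
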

\begin{proof} The uniqueness of $u_\varphi^B$ and its positivity when $\varphi$ is positive is a direct consequence of the caloric maximum principle (see e.g. \cite[Theorem 8.2]{BB}). To prove the existence of $u_\varphi^B$ we may, and do, assume $z_0=(0,0),$ since $H$ is left translation invariant  and 
$$B(z_0,r)=z_0 + B(0,r),\quad 0\in\erreu.$$

Let $\varphi\in C(\partial_n B,\erre)$ and let $(p_k)_{k\in\enne}$ be a sequence of polynomials in $\erreu$ uniformly convergent 
to $\varphi$ on $\partial_n B.$ By using the notation of Corollary \ref{corollario}, we let 
$$u_k:=u_{p_k},\quad k\in \enne.$$

Then, $u_k$ is caloric in $B$ since it is caloric in $\erreu$. Moreover $u_k=p_k$ on $\partial P$ in \eqref{bordo}, hence on $\partial_n B.$ Therefore, by the caloric maximum principle, 

\begin{equation*} \max_ {\overline B} |u_k -u_h| =   \max_ {\partial_n B} |u_k -u_h|  = \max_ {\partial_n B} |p_k -p_h|  \ttende 0\mbox{ as }  k,h \ttende \infty.
\end{equation*}
From this, one gets the existence of a function $u\in C(\overline{B},\erre)$ which is caloric in $B$ - thanks to the convergence principle - and such that $$u|_{\partial_n B} = \lim_{k\tende \infty} u_k|_{\partial B} =  \lim_{k\tende \infty} p_k |_{\partial B} = \varphi.$$
Thus, the function $u$ is the requested function $u_\varphi^B.$
\end{proof}

\begin{remark} From the above proof it follows that the solution $u_\varphi$ of \eqref{casamia} actually is continuous up to $\overline{B}.$ 

\end{remark} 
For completeness reasons, and also to stress   its elementary character, in Appendix \ref{1} we will give a simple proof of the
{convergence principle}. In Appendix \ref{2} we sketch how to construct the caloric Perron solutions starting from Theorem \ref{casamia}.

\appendix 
\section{The convergence principle}\label{1}
\subsection*{$\bullet$ \it  The fundamental solution of $H$}
\mbox{}

 The Gauss--Weierstrass kernel, i.e., the function \begin{equation*} \Gamma:\erreu\ttende\erre,\quad \Gamma(x,t)=
\begin{cases}
 0\  \mbox{ if }  t\le 0,   \\
  (4\pi t)^{-\frac{N}{2}}\exp\left( -\frac{|x|^2}{4t}\right) \ \mbox{ if } t>0, \end{cases}
\end{equation*}
is the fundamental solution with pole at the origin of the heat operator in $\erreu$.
$\Gamma$ is smooth in $\erreu\meno\{(0,0)\}$ and locally summable in $\erreu$. Its crucial property is the following one:
\begin{equation}\label{a1} \varphi(z)= -\int_\erreu \Gamma(z-\zeta) H\varphi(\zeta)\ d\zeta \end{equation}
for every $z\in\erreu$ and for every $\varphi\in C_0^\infty( \erreu,\erre).$

\subsection*{\it $\bullet$ Caloric norm and caloric disks }
\mbox{}

If $z=(x,t)\in\erreu$ we let 
$$\| z\|= \mbox{{\it caloric norm} of $z$}:= (|x|^4+t^2)^{\frac{1}{4}}.$$
We call {\it caloric disk} of center $z_0 \in \erreu$ and radius $r>0$ the open set 
$$D(z_0,r):= \{ z\in\erreu \ : \ \| z -z_0\|<r \}.$$

\subsection*{\it $\bullet$ A representation formula for caloric functions }
\mbox{}

An easy consequence of property  \eqref{a1} is the following representation theorem 

\begin{theorem}\label{A1}  Let $\Omega\subseteq\erreu $ be open. For every caloric disk $D=D(z_0,r)$ such that 
$\overline{2D}:=\overline{D(z_0,2r)}\subseteq \Omega$, there exists a function $$(z,\zeta)\frecciaf K_D(z,\zeta)$$ of class 
$C^\infty$ in an open set containing $\overline{D\times 2D}$ such that 
$$u(z)=\int_{2D} K_D(z,\zeta )u(\zeta)\ d\zeta \mbox{\quad  for every } z\in D$$ 
and for every $u$ caloric in $\Omega.$ 

\end{theorem}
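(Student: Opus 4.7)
The idea is to apply the fundamental-solution identity \eqref{a1} to $\chi u$ for a suitably chosen cutoff $\chi$, and then to integrate by parts in the spatial variables to move every derivative off the unknown $u$. Concretely, I would fix $\chi\in C_0^\infty(\erreu)$ with $\chi\equiv 1$ on an open neighborhood $U$ of $\overline{D}$ and $\operatorname{supp}\chi$ a compact subset of $2D$; such a $\chi$ exists because $\overline{2D}\subseteq\Omega$ and $\overline{D}$ lies strictly inside $2D$ in the Euclidean sense.

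Since, by definition, caloric functions are smooth, $\chi u\in C_0^\infty(\erreu)$ whenever $u\in\C(\Omega)$, and \eqref{a1} applied to $\chi u$ reads, for every $z\in D\subset U$,
$$u(z)\;=\;-\int_{\erreu}\Gamma(z-\zeta)\,H(\chi u)(\zeta)\,d\zeta.$$
Expanding $H=\varDelta-\partial_t$ through the product rule and using $Hu=0$ gives $H(\chi u)=u\,H\chi+2\,\nabla\chi\cdot\nabla u$, where $\nabla$ denotes the spatial gradient. Integrating the term that contains $\nabla u$ by parts in the spatial variables (no boundary contributions, since $\chi$ has compact support) transfers the gradient from $u$ onto the factor $\Gamma(z-\cdot)\nabla\chi$, and produces
$$u(z)\;=\;\int_{2D} K_D(z,\zeta)\,u(\zeta)\,d\zeta,$$
where $K_D(z,\zeta)$ is an explicit finite combination of $\Gamma(z-\zeta)$ and $\nabla_\zeta\Gamma(z-\zeta)$ multiplied by the first- and second-order derivatives of $\chi$ at $\zeta$; in particular every term of $K_D$ carries at least one derivative of $\chi$ as a factor.

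The remaining (and only delicate) point is the smoothness of $K_D$ on an open set containing $\overline{D}\times\overline{2D}$, and this is exactly where the choice of $\chi$ pays off. For $\zeta$ outside $\operatorname{supp}\chi$ every factor in $K_D$ vanishes, so $K_D$ extends smoothly by $0$ across a neighborhood of $\overline{2D}\setminus\operatorname{supp}\chi$. For the remaining $\zeta$'s, the derivatives of $\chi$ are in fact supported in the compact set $\operatorname{supp}\chi\setminus U$, whose Euclidean distance from $\overline{D}$ is strictly positive; hence for $(z,\zeta)$ in a small open neighborhood of $\overline{D}\times(\operatorname{supp}\chi\setminus U)$ the vector $z-\zeta$ stays in a compact subset of $\erreu\setminus\{(0,0)\}$, on which $\Gamma$ (and all its derivatives) is $C^\infty$. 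This is precisely why $\chi$ must equal $1$ on a whole \emph{neighborhood} of $\overline{D}$ and not merely on $\overline{D}$ itself, and it is the main obstacle to watch for — everything else is a routine product rule and one integration by parts.
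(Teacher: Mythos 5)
Your argument is correct and is essentially the paper's own proof: cut off with a function equal to $1$ near $\overline{D}$ and supported in $2D$, apply \eqref{a1} to the product, use $Hu=0$ to reduce $H(\chi u)$ to terms carrying derivatives of the cutoff, and integrate by parts once in the spatial variables to obtain the kernel. Your closing discussion of why $K_D$ is smooth near $\overline{D\times 2D}$ (every term contains a derivative of $\chi$, supported away from $\overline{D}$, so $z-\zeta$ avoids the pole of $\Gamma$) is exactly the point the paper leaves implicit, so nothing is missing.
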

\begin{proof} Let $\psi\in C_0^\infty(2D,\erre)$ be such that $\psi\equiv 1$ in a neighbourhood of $\overline D$. Then $\psi u\in C_0^\infty (2D,\erre)$ and $u=\psi u$ in $D$. As a consequence, by \eqref{a1},  
\begin{eqnarray*} u(z)&=& -\int_{2D}  \Gamma(z-\zeta) H(\psi u )(\zeta)\ d\zeta \\ 
&=& -\int_D \Gamma(z-\zeta) \left( u(\zeta) H\psi (\zeta) + 2 \langle \nabla u(\zeta), \nabla \psi(\zeta)\rangle\right) \ d\zeta \mbox{\quad  for every } z\in D,\end{eqnarray*}
where $\nabla$ and $\langle \ , \  \rangle$ denote, respectively, the gradient and the inner product in $\erren.$ Integrating by parts the second summand at the last right hand side we find 

$$u(z)=\int_{2D} K_D(z,\zeta) u(\zeta)\ d\zeta,$$ 
where, denoting by $\varDelta$ the Laplacian with respect to the spatial variables, 
$$K_D(z,\zeta) =- \Gamma(z-\zeta) H\psi(\zeta) -2 \langle \nabla \Gamma(z-\zeta), \nabla \psi(\zeta)\rangle + \Gamma(z-\zeta)\varDelta \psi(\zeta).$$
Then $K_D$ is a smooth function in a neighbourhood of $D\times 2D$ and we are done.

\end{proof}

\subsection*{\it $\bullet$ The convergence principle}
\mbox{}
\\ As recalled in the Introduction the convergence principle is the statement of the following theorem.
\begin{theorem}\label{A1.2} Let $(u_k)$ be a sequence of caloric functions in an open set $\Omega\subseteq\erreu$. Suppose $(u_k)$ uniformly convergent to a function $u:\Omega\ttende\erre$ on every compact subset of $\Omega.$ Then,
$$u\in C^\infty(\Omega,\erre) \andd Hu=0 \inn \Omega.$$

\end{theorem}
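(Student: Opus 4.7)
The plan is to reduce everything to the representation formula of Theorem \ref{A1}, which already encodes the regularizing effect of the kernel $K_D$. Fix an arbitrary point $z_0 \in \Omega$. Since $\Omega$ is open, I can choose $r>0$ small enough that $\overline{2D} \subseteq \Omega$, where $D=D(z_0,r)$. By Theorem \ref{A1}, there exists a smooth kernel $K_D$ defined on a neighbourhood of $\overline{D \times 2D}$ such that
\begin{equation*}
u_k(z) = \int_{2D} K_D(z,\zeta)\, u_k(\zeta)\, d\zeta \quad\text{for every } z\in D,\ k\in\enne.
\end{equation*}

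Next I would pass to the limit under the integral sign. Since $\overline{2D}$ is a compact subset of $\Omega$, the sequence $(u_k)$ converges uniformly to $u$ on $\overline{2D}$; in particular $(u_k)$ is uniformly bounded on $\overline{2D}$ and $K_D(z,\cdot)u_k \to K_D(z,\cdot)u$ uniformly there. Passing to the limit gives
\begin{equation*}
u(z) = \int_{2D} K_D(z,\zeta)\, u(\zeta)\, d\zeta \quad\text{for every } z\in D.
\end{equation*}

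Now the smoothness of $K_D$ in the variable $z$ on a neighbourhood of $\overline{D\times 2D}$, together with standard differentiation under the integral sign (the partial derivatives of $K_D$ are uniformly bounded on $\overline{D}\times \overline{2D}$ and $u$ is bounded on $\overline{2D}$), shows that $u\in C^\infty(D,\erre)$ and that, for every multi-index $\alpha$,
\begin{equation*}
\partial^\alpha u(z) = \int_{2D} \partial_z^\alpha K_D(z,\zeta)\, u(\zeta)\, d\zeta,\quad z\in D.
\end{equation*}
The same differentiation applied to the formula for $u_k$ gives $\partial^\alpha u_k(z) \to \partial^\alpha u(z)$ for every $z\in D$ (again by uniform convergence of $u_k$ to $u$ on $\overline{2D}$). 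Applying this with the combination of derivatives defining $H$, we get $Hu_k(z) \to Hu(z)$ for every $z\in D$; since $Hu_k \equiv 0$, this forces $Hu = 0$ in $D$. As $z_0\in\Omega$ was arbitrary, $u\in C^\infty(\Omega,\erre)$ and $Hu = 0$ in $\Omega$.

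The only point that really needs care is justifying the interchange of limit and integral and of derivative and integral; but since $\overline{2D}$ is compact and $K_D$ is smooth on a neighbourhood of $\overline{D\times 2D}$, this is immediate from the dominated convergence theorem applied to sequences that are uniformly bounded on $\overline{2D}$.
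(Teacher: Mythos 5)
Your proposal is correct, and its backbone is the same as the paper's: represent each $u_k$ via the kernel $K_D$ of Theorem \ref{A1} on a caloric disk $D$ with $\overline{2D}\subseteq\Omega$, pass to the limit using uniform convergence on the compact set $\overline{2D}$, and read off the smoothness of $u$ from the smoothness of $K_D$. The only genuine divergence is in how you conclude $Hu=0$. The paper, having the smoothness of $u$ in hand, argues weakly: for $\varphi\in C_0^\infty(D,\erre)$ it writes $\int_D (Hu)\varphi\,dz=\int_D u\,H^*\varphi\,dz=\lim_k\int_D u_k\,H^*\varphi\,dz=\lim_k\int_D (Hu_k)\varphi\,dz=0$, where $H^*=\varDelta+\partial_t$ is the formal adjoint, and then invokes the arbitrariness of $\varphi$. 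You instead differentiate under the integral sign in both representation formulas, obtaining $\partial^\alpha u_k\to\partial^\alpha u$ pointwise (indeed locally uniformly) in $D$ since the difference is controlled by $\sup_{\overline{2D}}|\partial_z^\alpha K_D|\cdot|2D|\cdot\sup_{\overline{2D}}|u_k-u|$, and conclude $Hu=\lim_k Hu_k=0$ directly. Both routes are sound: yours stays entirely classical (no duality/test functions) at the cost of justifying differentiation under the integral, which is immediate here because $K_D$ is smooth on a neighbourhood of $\overline{D\times 2D}$; the paper's route never differentiates the kernel, trading that for one integration by parts against test functions. As a small bonus, your argument also yields convergence of all derivatives of $u_k$ to those of $u$ on compact subsets, a slightly stronger conclusion than stated.
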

\begin{proof} 
It is enough to show that $u$ is smooth and caloric in every caloric disk $D$ such that $\overline{2D}\subseteq\Omega.$ So, let $D$ be such a disk. Then, by Theorem \ref{A1}, 
\begin{equation}\label{a1.2} u_k(z)=\int_{2D} K_D(z,\zeta) u_k(\zeta)\ d\zeta  \mbox{\ for every } z\in D
\end{equation} 
and for every $k\in\enne$. Since $(u_k)$ is uniformly convergent on $\overline{2D}$, letting $k$ go to infinity in 
\eqref{a1.2}, we get 
\begin{equation*} u(z)=\int_{2D} K_D(z,\zeta) u(\zeta)\ d\zeta  \mbox{\ for every } z\in D.
\end{equation*} 
The smoothness of the kernel $K_D$ implies $u\in C^\infty(D,\erre).$ To show that $u$ is caloric in $D$ we argue as follows. Denoting $$H^*: 
=\varDelta + \partial_t$$ the formal adjoint of $H$, for every $\varphi\in C_0^\infty(D,\erre)$ we have 

\begin{eqnarray*} 
\int_D (Hu)\varphi\ dz = \int_D u\  H^* \varphi\ dz = \lim_{k\tende \infty} \int_D u_k\  H^* \varphi\ dz = \lim_{k\tende \infty} \int_D (H u_k)\  \varphi\ dz =0,
\end{eqnarray*} 
since $u_k$ is caloric for every $k\in\enne$. Hence,

$$\int_D (Hu)\ \varphi\ dz=0  \mbox{\quad  for every } \varphi\in C_0^\infty(\Omega,\erre).$$
This implies $Hu=0$ in $D$, completing the proof.

\end{proof} 

\subsection*{\it $\bullet$ A consequence of convergence principle}
\mbox{}

A family $\effe$ of real functions in an open set $\Omega\subseteq\erreu$ is said {\it up directed} if 
{\it for every $u,v\in\effe$ there exists $w\in\effe$ such that}
$$u\le w,\quad v\le w.$$
Then Theorem \ref{A1.2} and a Real Analysis lemma imply the following result.
\begin{theorem}\label{A1.3}Let $\effe$ be an up directed family of caloric functions in an open set $\Omega\subseteq\erreu.$
Let $u:\Omega\ttende ]-\infty,\infty]$,
$$u:=\sup\effe.$$
If $u$ is bounded above on every compact subset of $\Omega$, then 
$$u\in C^\infty(\Omega,\erre)\andd Hu=0\inn\Omega.$$

\end{theorem}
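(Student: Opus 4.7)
The plan is to reduce Theorem \ref{A1.3} to Theorem \ref{A1.2} by extracting, via up-directedness and the separability of $\Omega$, a single increasing sequence from $\effe$ whose pointwise limit is $u$. The \emph{Real Analysis lemma} alluded to in the statement will be a standard density/exhaustion argument, which I make explicit below.

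First I would fix a countable dense subset $\{z_j\}_{j\in\enne}\subseteq\Omega$ and, for every pair $(j,n)\in\enne^2$, choose $f_{j,n}\in\effe$ with $f_{j,n}(z_j)>u(z_j)-\tfrac1n$ (the local upper boundedness of $u$ makes $u(z_j)$ finite). An induction using the up-directedness then produces an increasing sequence $(u_k)\subseteq\effe$ dominating $f_{j,n}$ for all pairs with $j+n\le k$, so that $\sup_k u_k(z_j)=u(z_j)$ at every $z_j$. Setting $v:=\lim_k u_k$, we have $u_1\le u_k\le u$; since $u_1$ is continuous (hence locally bounded) and $u$ is locally bounded above by hypothesis, $(u_k)$ is locally bounded, so $v:\Omega\tende\erre$ is well-defined with $v\le u$.

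Next I would verify that $v$ is caloric on each caloric disk $D$ with $\overline{2D}\subseteq\Omega$. Theorem \ref{A1} gives
$$u_k(z)=\int_{2D}K_D(z,\zeta)\,u_k(\zeta)\,d\zeta,\qquad z\in D,$$
and dominated convergence, with dominating function $\|K_D\|_\infty\cdot\max(|u_1|,|v|)$ (bounded on $\overline{2D}$ by the previous step), passes to the limit:
$$v(z)=\int_{2D}K_D(z,\zeta)\,v(\zeta)\,d\zeta,\qquad z\in D.$$
Smoothness of $K_D$ immediately yields $v\in C^\infty(D,\erre)$. Since the continuous functions $u_k$ increase monotonically to the continuous function $v$ on $\overline{D}$, Dini's theorem promotes the convergence to be uniform on compact subsets of $D$. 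Theorem \ref{A1.2} then delivers $Hv=0$ in $D$, hence in all of $\Omega$.

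The hard part will be confirming that this $v$ actually equals $u$, not some strictly smaller function: on the dense set $\{z_j\}$ equality is built in by construction, but a priori $v(z)<u(z)$ could occur elsewhere. The remedy is the automatic lower semicontinuity of $u=\sup\effe$ as a supremum of continuous functions: for any $z\in\Omega$ picking dense $z_{j_n}\to z$ one obtains
$$u(z)\le\liminf_n u(z_{j_n})=\liminf_n v(z_{j_n})=v(z)\le u(z),$$
by lower semicontinuity of $u$ and continuity of $v$. Therefore $u=v\in C^\infty(\Omega,\erre)$ with $Hu=0$, completing the proof.
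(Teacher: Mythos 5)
Your proof is correct, and its analytic core (representation formula on a caloric disk, dominated convergence, smoothness of $K_D$, Dini, then the convergence principle of Theorem \ref{A1.2}) is exactly the paper's argument. The one genuine difference is how the increasing sequence is obtained: the paper simply cites a Real Analysis lemma (Armitage--Gardiner, Lemma 3.7.1) producing an increasing sequence in $\effe$ converging pointwise to $u$ \emph{everywhere} in $\Omega$, and then runs the disk argument directly on $u$; you instead build, by up-directedness and a countable dense set, an increasing sequence whose supremum $v$ agrees with $u$ only on that dense set, prove that $v$ is smooth and caloric, and then recover $v=u$ on all of $\Omega$ from the lower semicontinuity of $u$ (as a supremum of continuous functions) together with the continuity of $v$. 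This buys self-containedness -- you effectively prove the needed case of the cited lemma rather than invoke it, which fits the paper's elementary spirit -- at the price of the extra identification step, which you carry out correctly ($v\le u$ everywhere, $v=u$ on a dense set, $u$ lsc, $v$ continuous). For completeness, note that the full lemma (pointwise convergence everywhere) also has a short proof via a Lindel\"of argument on the open sets $\{f>q\}$, $f\in\effe$, $q$ rational, so either route stays elementary.
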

\begin{proof}  A Real Analysis Lemma (see e.g. \cite[Lemma 3.7.1]{AG}) implies the existence of a monotone increasing sequence $(u_k)$  of functions in $\effe$ such that 
$$\lim_{k\ttende\infty} u_k= u \mbox{\quad pointwise} \inn \Omega.$$
Now, we can argue as in the proof of Theorem \ref{A1.2}. Let $D$ be a caloric disk such that $\overline{2D}\subseteq\Omega.$ Then identity \eqref{a1.2} holds for every function $u_k$, $k\in\enne$. Since 
$$u_1\le u_k\andd \sup_{2D} u_k \le \sup_{2D} u<\infty,$$
by Lebesgue Dominate Convergence Theorem, letting $k$ go to infinity in  \eqref{a1.2}, one obtains 

$$u(z)=\int_{2D} K_D (z,\zeta)\ u(\zeta)\ d\zeta\qquad \forall z\in D.$$
The smoothness of the kernel $K$ implies the smoothness of $u$ in $D$ so that, since $(u_k)$ is increasing, by Dini Theorem $(u_k)$ converges uniformly on every compact subset of $D$. As a consequence, by Theorem \ref{A1}, $u$ is caloric in $D$.
This completes the proof of Theorem \ref{A1.3} since $D$ is any caloric disk such that $\overline{2D}\subseteq\Omega.$
\end{proof} 

\subsection*{\it $\bullet$ The operator $f\frecciaf h_f^B$}
\mbox{}

\indent Let $B$ be a caloric bowl. 
As an application of Theorem \ref{A1.3} we show how to extend the operator 
$$ C(\partial_n B,\erre)\ni\varphi\frecciaf u_\varphi^B\in \mathscr{C}(B),$$
defined in Theorem  \ref{casamia}, to the bounded above lover semicontinuous functions.

Let $B$ be a caloric bowl and let 
$$ f: \partial_n B \ttende ]-\infty,\infty[$$
be a bounded above lover semicontinuous function. Define 
$$\effe(B,f):= \{u^B_\varphi \ :\ \varphi\in C(\partial_n B,\erre),\ \varphi\le f \} $$
and $$h_f^B:=\sup \effe(B,f).$$
Obviously, if the function $f$ is continuous, then $h_f^B=u_f^B,$ so that $f\frecciaf h_f^B$ is an extension of $f\frecciaf u_f^B$.By using the caloric maximum principle, it is easy to show that
$\effe(B,f)$ is up directed and that 
$$h_f^B\le m\mbox{\quad if \ } m=\sup_{\partial_n B} f.$$
Then, by Theorem  \ref{A1.3},
$$h_f^B\in \C(B).$$

\section{The caloric Perron solution}\label{2}

\subsection*{\it $\bullet$ Mean Value Theorem for caloric functions}
\mbox{}

For every $z_0\in\erreu$ and every $r>0$ we let 
\begin{eqnarray*} \Omega_r(z_0)  &=& \mbox{Pini--Watson ball with pole at\ }  z_0 \mbox{\ and radius \ }  r \\
 &: =& \left\{ z\in\erreu \ : \ \Gamma(z_0-z) > (4\pi r)^{-\frac {N}{2}}\right\} 
\end{eqnarray*} 
and 
\begin{eqnarray*} W(z)=W(x,t)   &=& \mbox{Watson kernel \ }  \\
 &: =&\frac {1}{4} \left( \frac{|x|}{t}\right)^2. 
\end{eqnarray*} 
We also denote by $M_r(u)(z_0)$ the average operator 
$$M_r(u)(z_0)= \left( \frac{1}{4\pi r}\right)^{\frac {N}{2}} \int_{\Omega_r (z_0)} u(\zeta)\ W(z-\zeta)\ d\zeta.$$
Then, the following theorem holds

\begin{theorem} \label{A2.1} Let $\Omega\subseteq\erreu$ be open and let $u\in C(\Omega,\erre).$ The following 
statements are equivalent:
\begin{itemize}
\item[$(i)$] For every Pini--Watson ball $\Omega_r(z)$ with closure contained in $\Omega$ $$u(z)=M_r(u)(z).$$ 
\item[$(ii)$] For every $z\in\Omega$ there exists $r(z)>0$ such that 
$$u(z)=M_r(u)(z) \mbox{\qquad for \ } 0<r<r(z).$$ 
\item[$(iii)$]  $u\in C^\infty (\Omega,\erre)$ and 
$$Hu=0\inn \Omega.$$

\end{itemize}
\begin{proof} See Watson \cite[Chapter 1 and Chapter 2]{W}.
\end{proof} 
\end{theorem}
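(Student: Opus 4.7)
The plan is to establish the cyclic chain $(iii)\Rightarrow(i)\Rightarrow(ii)\Rightarrow(iii)$. The step $(i)\Rightarrow(ii)$ is free: for each $z\in\Omega$ one may take $r(z)$ to be any positive number so small that $\overline{\Omega_{r(z)}(z)}\subseteq\Omega$, which exists since the Pini--Watson balls shrink to $\{z\}$ as $r\tende 0^+$. The substance lies in the other two implications.

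For $(iii)\Rightarrow(i)$, I would apply a Green-type identity for $H$ and its formal adjoint $H^*=\varDelta+\partial_t$ on $\Omega_r(z_0)\meno\overline{\Omega_\eps(z_0)}$, for small $\eps>0$, pairing the caloric function $u$ with the test function
\begin{equation*}
v(\zeta):=\Gamma(z_0-\zeta)-(4\pi r)^{-\frac{N}{2}}.
\end{equation*}
By definition of the Pini--Watson ball, $v$ vanishes on $\partial\Omega_r(z_0)$, so the outer boundary integral collapses to a pure gradient contribution. The identity $\nabla_\xi\Gamma(z_0-\zeta)=\Gamma(z_0-\zeta)\,(x_0-\xi)/(2(t_0-\tau))$ yields $|\nabla_\xi\Gamma(z_0-\cdot)|^2/\Gamma(z_0-\cdot)=\Gamma(z_0-\cdot)\,W(z_0-\cdot)$, which on $\partial\Omega_r(z_0)$ equals $(4\pi r)^{-N/2}W(z_0-\cdot)$. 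The coarea formula applied to the level sets of $\Gamma(z_0-\cdot)$ then converts this outer contribution into the volume integral $(4\pi r)^{-N/2}\int_{\Omega_r(z_0)}u(\zeta)W(z_0-\zeta)\,d\zeta$. On the inner boundary the integral concentrates at $z_0$ as $\eps\tende 0^+$ and produces $u(z_0)$ via the reproducing property \eqref{a1}, delivering the mean value identity.

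For $(ii)\Rightarrow(iii)$ I would argue by mollification. Let $\rho_\delta$ be a smooth space-time mollifier supported in a ball of radius $\delta$ and set $u_\delta:=u*\rho_\delta$ on $\Omega^\delta:=\{z\in\Omega\,:\,\operatorname{dist}(z,\partial\Omega)>\delta\}$. Since $\Omega_r(z)=z+\Omega_r(0)$ and $W(z-\cdot)$ is translation covariant, Fubini together with the change of variable $\zeta=\zeta'+\eta$ gives $M_r(u_\delta)(z)=(M_r u)*\rho_\delta(z)$, and hypothesis $(ii)$ applied pointwise yields $M_r(u_\delta)(z)=u_\delta(z)$ for $z\in\Omega^\delta$ and all sufficiently small $r>0$. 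Because $u_\delta$ is $C^\infty$, one may Taylor-expand $u_\delta(\zeta)$ around $z$ inside the defining integral of $M_r(u_\delta)(z)$; explicit evaluation of the relevant second moments of $W$ against $\Omega_r(0)$ shows that the leading nonconstant term in $r$ is a nonzero multiple of $Hu_\delta(z)$, forcing $Hu_\delta\equiv 0$ on $\Omega^\delta$. Finally, $u_\delta\ttende u$ locally uniformly as $\delta\tende 0^+$, and Theorem \ref{A1.2} yields $u\in C^\infty(\Omega,\erre)$ with $Hu=0$ in $\Omega$.

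The main obstacle is the identification of the leading-order coefficient in the expansion of $M_r(u_\delta)(z)$: one must compute the weighted moments $\int_{\Omega_r(0)}|\xi|^2W(\zeta)\,d\zeta$ and $\int_{\Omega_r(0)}\tau\,W(\zeta)\,d\zeta$ and verify that they combine with the correct sign so as to reproduce precisely $\varDelta-\partial_t$, rather than some other second-order operator. The parallel calculation in $(iii)\Rightarrow(i)$ is delicate for the same reason: it is only because $W$ is finely tuned to the level sets of $\Gamma$ that the Watson kernel emerges with no extraneous boundary contribution, which is presumably the reason the authors defer to \cite{W}.
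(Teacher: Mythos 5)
The paper itself offers no argument here (it simply defers to Watson's monograph), so your sketch has to stand on its own, and it does not: the implication $(ii)\Rightarrow(iii)$ has a genuine gap. Hypothesis $(ii)$ gives, at each point $z'$, only \emph{some} threshold $r(z')>0$ below which the mean value identity holds, with no locally uniform lower bound on $z'\mapsto r(z')$. Your mollification step needs much more: since $M_r(u_\delta)(z)=\int \rho_\delta(\eta)\,M_r(u)(z-\eta)\,d\eta$, to conclude $M_r(u_\delta)(z)=u_\delta(z)$ you must have $u(z')=M_r(u)(z')$ for one and the same $r$ at \emph{every} $z'$ in the $\delta$-neighbourhood of $z$, and $\inf_{z'} r(z')$ over that neighbourhood may be $0$. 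So the sentence ``hypothesis $(ii)$ applied pointwise yields $M_r(u_\delta)(z)=u_\delta(z)$ for $z\in\Omega^\delta$ and all sufficiently small $r$'' is unjustified; this is exactly the classical difficulty with the local (``restricted'') mean value property, for the Laplacian as well as for $H$. The standard repair is a different argument: show that $(ii)$ forces a maximum principle for $u$ on bounded open sets (this only needs arbitrarily small radii at each point), then on a small caloric bowl compare $u$ with the caloric function having the same boundary data (Theorem \ref{casamia} of this paper, or Watson's Chapter 2), and conclude $u$ coincides with it, hence is caloric. Your moment computation, by contrast, is not the real obstacle: once a uniform radius were available, the fact that $M_r$ reproduces caloric polynomials forces the second moments to combine into a positive multiple of $\varDelta-\partial_t$ of size comparable to $r$, against $O(r^2)$ remainders, by parabolic scaling of the heat ball.

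The outline of $(iii)\Rightarrow(i)$ is the right classical one (Green identity for $H$, $H^*$ between level surfaces of $\Gamma(z_0-\cdot)$), but two steps are misstated. First, the outer boundary term produced by a single Green identity is the surface integral $\int_{\partial\Omega_r(z_0)} u\,|\nabla_\xi\Gamma(z_0-\cdot)|^2/|\nabla\Gamma(z_0-\cdot)|\,d\sigma$, where the full space--time gradient comes from the unit normal; it is not $|\nabla_\xi\Gamma|^2/\Gamma$, and the coarea formula cannot convert this single surface integral into the solid integral $(4\pi r)^{-N/2}\int_{\Omega_r(z_0)}uW(z_0-\zeta)\,d\zeta$. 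What your identity actually yields, after the $\eps\to 0$ limit (which uses continuity of $u$ at $z_0$ and the normalization of the kernel, rather than \eqref{a1}), is a Fulks-type \emph{surface} mean value at the single level $(4\pi r)^{-N/2}$. To reach Watson's \emph{volume} mean you must run the identity at every level $c\in((4\pi r)^{-N/2},\infty)$, i.e. for every $s\in(0,r)$, and only then integrate across levels by coarea, using $W=|\nabla_\xi\Gamma|^2/\Gamma^2$ together with $\int_{a_r}^{\infty}c^{-2}\,dc=a_r^{-1}$, $a_r=(4\pi r)^{-N/2}$. Second, the region $\Omega_r(z_0)\setminus\overline{\Omega_\eps(z_0)}$ has a cusp at $z_0$, where both level surfaces meet and $\nabla\Gamma(z_0-\cdot)$ blows up, so the divergence theorem there requires an additional excision and limit. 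Both points are fixable, but as written the solid identity does not follow from the single Green identity you invoke.
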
 

\subsection*{\it  $\bullet$ Supercaloric and subcaloric functions}
\mbox{}
Let $\Omega\subseteq\erreu$ be open and let 
$$u:\Omega\ttende\erre$$
be a lower semicontinuous function. We say that $u$ is {\it supercaloric in $\Omega$} if for every $z\in\Omega$ there exists $r(z)>0$ such that 
$$u(z) \geq M_r(u)(z)\mbox{\qquad for \ } 0<r<r(z).$$  

We say that $u$ is {\it subcaloric in $\Omega$} if $-u$ is supercaloric. With  $\supc$ ($\subc$), we denote the family of the supercaloric (subcaloric) functions in $\Omega.$
It can be elementarily proved that a sufficiently smooth function $u$ is supercaloric (subcaloric) in an open set $\Omega$ if and only if 
$$Hu\le 0\inn\Omega\quad  (Hu\geq 0\inn\Omega).$$

\subsection*{\it $\bullet$ A caloric Perron-type regularitazion}
\mbox{}
To begin with, we fix some notation.  If $B$ is a caloric bowl we denote 
$$\hat B:= \overline{B}\meno \partial_n B.$$

Equivalently 

$$\hat B=B\cup \mathrm{top }(B), $$
where
$$\mathrm{top }(B):= \partial B\meno\partial_n B.$$

Let us consider a bounded above supercaloric function $u$ in an open set $\Omega\subseteq\erreu.$ If $B=B(z_0,r)$ is a caloric bowl such that $2B:=B(z_0,2r)\subseteq \Omega$, we define
$$u_B:\Omega\ttende\erre$$ as follows:
$$u_B(z)=u(z) \mbox{\quad if \ } z\notin \hat B,$$
and 
$$u_B(z)=h_f^{2B}(z)  \mbox{\quad if \ } z\in \hat B,$$
where $f=u|_{\partial_n 2B}.$

We want to explicitly remark that $u_B$ is {\it caloric} in $B$ and {\it continuous} up to $\hat B$. The function $u_B$ is what we call {\it caloric Perron-type regularization of $u$ in $B$.} It satisfies all the crucial properties of the classical harmonic Perron regularization. Precisely, the following theorem holds.

\begin{theorem} \label{A2.2} 
Let $u$ be a bounded above supercaloric function in an open set $\Omega\subseteq\erreu$ and let $B$ be a bowl such that $2B\subseteq\Omega.$ Then,

\begin{itemize}
\item[$(i)$] $u_B\in \supc$;

\item[$(ii)$] $u_B\le u$;

\item[$(iii)$]  $u_B$ is caloric in $B$ and continuous in $\hat B$;
\item[$(iv)$]  if $v\in\supc$ and $v\le u$ then $v_B\le u_B.$

\end{itemize}

\end{theorem}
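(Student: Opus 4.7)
My plan is to prove the claims in the order (iii), (iv), (ii), (i), since later parts lean on earlier ones.

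For (iii) I first note that $\hat B\subset 2B$: if $(x,t)\in\hat B$ then $|x-x_0|^2<t-t_0\le r^2<4r^2$. Since $f:=u|_{\partial_n 2B}$ is bounded above and lower semicontinuous, the construction at the end of Appendix \ref{1} gives $h_f^{2B}\in\C(2B)$, so $u_B=h_f^{2B}$ is caloric on $B$ and $C^\infty$ (in particular continuous) on $\hat B\subset 2B$. Part (iv) is equally quick: off $\hat B$ we have $v_B=v\le u=u_B$, and on $\hat B$ the map $g\mapsto h_g^{2B}$ is monotone in $g$, because enlarging $g$ enlarges the admissible family $\effe(2B,g)$; since $v\le u$ on $\partial_n 2B$ this yields $v_B\le u_B$ on $\hat B$.

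For (ii), the only non-trivial case is $z\in\hat B$. Fix a continuous $\varphi\le f$ on $\partial_n 2B$ and set $w:=u-u_\varphi^{2B}$; as the difference of a supercaloric and a caloric function, $w$ is supercaloric and lower semicontinuous in $2B$, and by continuity of $u_\varphi^{2B}$ up to $\partial_n 2B$ together with lower semicontinuity of $u$ one obtains $\liminf_{2B\ni z\to z_0}w(z)\ge f(z_0)-\varphi(z_0)\ge 0$ for every $z_0\in\partial_n 2B$. The caloric minimum principle on the bowl $2B$ then gives $w\ge 0$ in $2B\supset\hat B$, so $u_\varphi^{2B}\le u$ on $\hat B$; taking the supremum over admissible $\varphi$ yields $h_f^{2B}\le u$ on $\hat B$, proving (ii).

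Finally, for (i), the supermean inequality comes out cleanly: at $z\in\hat B$, $u_B$ is caloric near $z$, so $u_B(z)=M_r(u_B)(z)$ for small $r$; at $z\notin\hat B$, (ii) and supercaloricity of $u$ give $M_r(u_B)(z)\le M_r(u)(z)\le u(z)=u_B(z)$ for small $r$. I expect the main obstacle to be lower semicontinuity of $u_B$ at a point $z\in\partial_n B$, where $u_B(z)=u(z)$: approach of $z$ through $\Omega\setminus\hat B$ is harmless (there $u_B=u$ is lsc), but approach through $\hat B$ requires a separate argument. Using $\partial_n B\subset\partial_n 2B$ and the boundary continuity of each $u_\varphi^{2B}$, one gets $\liminf_{\zeta\to z,\,\zeta\in\hat B}h_f^{2B}(\zeta)\ge\varphi(z)$ for every continuous $\varphi\le f$, and taking the sup, invoking that a bounded above lsc function on the compact $\partial_n 2B$ is the pointwise supremum of its continuous minorants, gives $\liminf\ge f(z)=u(z)$, which together with the supermean inequality completes the proof of (i).
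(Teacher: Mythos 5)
The paper does not give a detailed proof of Theorem~\ref{A2.2}: it only remarks that the proof follows standard lines in potential theory, using the properties of the operator $f\frecciaf h_f^{2B}$ and the minimum principle for supercaloric functions (Watson's Theorem~3.11). Your proof is a careful and essentially correct realization of exactly that sketch, and every ingredient you invoke --- the monotonicity of $g\frecciaf h_g^{2B}$ obtained by inclusion of the families $\effe(2B,\cdot)$, the minimum principle for the supercaloric function $u-u_\varphi^{2B}$ on the bowl $2B$, the mean--value characterization of caloricity, and the representation of a bounded-above lsc function as the pointwise supremum of its continuous minorants on the compact $\partial_n 2B$ --- is what the authors have in mind.

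One small imprecision is worth flagging. In your proof of (i), you assert that ``at $z\in\hat B$, $u_B$ is caloric near $z$.'' This is literally true only for $z$ in the open bowl $B$: for $z\in\mathrm{top}(B)$ the function $u_B$ fails to agree with $h_f^{2B}$ (indeed to be $C^\infty$) in any full Euclidean neighbourhood of $z$, since just above the top lid $u_B$ reverts to $u$. The mean-value \emph{equality} $u_B(z)=M_\rho(u_B)(z)$ for small $\rho$ nevertheless holds at such $z$, but for a slightly more delicate reason: the Pini--Watson ball $\Omega_\rho(z)$ lies entirely in the past $\{t'<t\}$, and for $\rho$ small it is contained in $\hat B$ (because $|x-x_0|^2<r^2=t-t_0$ strictly), so that $u_B$ coincides on $\{z\}\cup\Omega_\rho(z)$ with the caloric function $h_f^{2B}$ and one may transfer the mean-value identity from the latter. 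It would also be worth stating explicitly that lower semicontinuity at $z\in\mathrm{top}(B)$ follows from (ii): approach from the past gives continuity of $h_f^{2B}$, while approach from the future gives $\liminf u_B=\liminf u\ge u(z)\ge u_B(z)$. With these points made explicit, the argument is complete.
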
 

\begin{proof} The proof of this theorem follows basic standard lines in harmonic and caloric Potential Theory. It uses in a crucial way the properties of the operator $f\frecciaf u_f^{2B}$ and the minimum principle for supercaloric functions (for this principle 
we directly refer to Watson's monograph \cite[Theorem 3.11]{W}.
\end{proof} 
We close this appendix with the following point.
\subsection*{\it $\bullet$ The caloric Perron solution}
\mbox{}\\
Let $\Omega\subseteq\erreu$ be open and bounded and let $\varphi\in C(\partial\Omega, \erre).$ We let 

$$\overline {\mathcal{U}}^\Omega_\varphi:= \{ u\in\supc\ : \ u \mbox{\ bounded above,}\ \liminf_{x\tende y} u(x) \geq \varphi(y)\ \forall y\in\partial \Omega\},$$

and $$\overline{H}_\varphi^\Omega:=\inf \overline{\mathcal{U}}^\Omega_\varphi.$$

We also let 

$$\underline{H}_\varphi^\Omega:=-\overline{H}_{-\varphi}^\Omega.$$

From the quoted above minimum principle for supercaloric functions one easily gets 

$$m\le \underline{H}_\varphi^\Omega \le \overline {H}_\varphi^\Omega \le M,$$
where $$m=\min_{\partial \Omega} \varphi \andd M=\max_{\partial \Omega} \varphi .$$
Actually, a stronger result holds
\begin{theorem} \label{A2.3}  For every $\varphi \in C(\partial \Omega,\erre)$ one has 

\begin{itemize}
\item[$(i)$] $\overline {H}_\varphi^\Omega  \in \C(\Omega)$;

\item[$(ii)$] $\overline {H}_\varphi^\Omega=\underline {H}_\varphi^\Omega.$

\end{itemize}

\end{theorem}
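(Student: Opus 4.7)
The plan is to follow the classical Perron scheme, now entirely accessible because Theorem~\ref{casamia} has supplied the missing solvability principle on the basis $\mathscr{B}$ of caloric bowls. Throughout I write $\overline H$ for $\overline H^\Omega_\varphi$ and $\underline H$ for $\underline H^\Omega_\varphi$.

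For part (i), I would fix $z_0\in\Omega$ and a caloric bowl $B$ with $z_0\in B$ and $2B\subseteq\Omega$. Choose a countable dense subset $\{z_k\}_{k\in\enne}$ of $B$, and by a standard diagonal selection produce a decreasing sequence $(v_n)\subset\overline{\mathcal U}^\Omega_\varphi$, bounded above by $M$, with $v_n(z_k)\to\overline H(z_k)$ for every $k$; since finite minima of functions in $\overline{\mathcal U}^\Omega_\varphi$ stay in the Perron family, this is routine. Regularize on $B$ via $w_n:=(v_n)_B$. By Theorem~\ref{A2.2} each $w_n$ still lies in $\overline{\mathcal U}^\Omega_\varphi$, is caloric in $B$, and $(w_n)$ is decreasing and bounded below by $m$. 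Since $\{-w_n\}$ is up-directed and bounded above, Theorem~\ref{A1.3} yields $w:=\lim w_n\in\C(B)$, with $w\ge\overline H$ in $B$ and $w(z_k)=\overline H(z_k)$ for every $k$. To conclude $w\equiv\overline H$ on $B$, suppose $w(z')>\overline H(z')$ at some $z'\in B$, pick $\tilde u\in\overline{\mathcal U}^\Omega_\varphi$ with $\tilde u(z')<w(z')$, apply the same recipe to $\min(v_n,\tilde u)$, and let $\tilde w$ be the resulting caloric limit; one obtains $\tilde w\le w$ on $B$, $\tilde w(z_k)=w(z_k)$ for every $k$, and $\tilde w(z')<w(z')$. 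Hence the non-negative caloric function $w-\tilde w$ on $B$ vanishes on a dense set yet is positive at $z'$, contradicting continuity. Thus $\overline H=w\in\C(B)$, and arbitrariness of $z_0$ gives $\overline H\in\C(\Omega)$.

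For part (ii), only $\overline H\le\underline H$ is at stake. Let $\mathscr R:=\{\varphi\in C(\partial\Omega,\erre):\overline H^\Omega_\varphi=\underline H^\Omega_\varphi\}$ be the class of \emph{resolutive} data. Subadditivity and positive homogeneity of $\varphi\mapsto\overline H^\Omega_\varphi$, together with the sup-norm bound $\|\overline H^\Omega_\varphi\|_\infty\le\|\varphi\|_\infty$, all direct consequences of the definitions and the minimum principle cited from \cite{W}, render $\mathscr R$ a closed linear subspace of $C(\partial\Omega)$. Every caloric polynomial $q$ is trivially resolutive, since $q|_\Omega\in\overline{\mathcal U}^\Omega_{q|_{\partial\Omega}}$ and $-q|_\Omega\in\overline{\mathcal U}^\Omega_{-q|_{\partial\Omega}}$ force $\overline H^\Omega_{q|_{\partial\Omega}}=\underline H^\Omega_{q|_{\partial\Omega}}=q|_\Omega$, and such polynomials exist in abundance thanks to Corollary~\ref{corollario}. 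The main obstacle I anticipate is the density step: showing that restrictions of caloric polynomials to $\partial\Omega$ are dense in $C(\partial\Omega)$. This parabolic Runge-type statement, combined with the closedness of $\mathscr R$, forces $\mathscr R=C(\partial\Omega)$ and hence (ii); a complete treatment along these lines is in \cite[Ch.~8]{W}.
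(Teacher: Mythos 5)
The paper does not actually write out a proof of Theorem~\ref{A2.3}; the authors only say it ``can be proved with a standard procedure in which the caloric Perron-type regularization plays the crucial r\^ole.'' So what follows is a review of your proposal on its own merits.

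Your argument for part (i) is correct and is precisely the standard procedure the paper has in mind: diagonal extraction of a decreasing sequence $(v_n)\subset\overline{\mathcal U}^\Omega_\varphi$ converging to $\overline H$ on a countable dense subset $\{z_k\}$ of a bowl $B$ with $2B\subseteq\Omega$ (after truncating by $M$, so WLOG $m\le v_n\le M$); passage to the regularizations $w_n=(v_n)_B$, which by Theorem~\ref{A2.2} stay in the upper class, are caloric in $B$, decreasing and bounded below by $m$; application of Theorem~\ref{A1.3} to the up-directed caloric family $\{-w_n\}$ in $B$ to get a caloric limit $w$; and the dense-set argument to upgrade $w=\overline H$ from $\{z_k\}$ to all of $B$. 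Each step is justified by Theorems~\ref{A2.2}, \ref{A1.3} and the minimum principle.

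Part (ii), however, contains a genuine gap, not a technicality deferrable to \cite{W}: the density you need is \emph{false}. Traces on $\partial\Omega$ of caloric polynomials --- indeed of any functions caloric in a neighbourhood of $\overline\Omega$ --- are in general not dense in $C(\partial\Omega,\erre)$. Take $N=1$, $\Omega=(-1,1)\times(0,1)\subset\erreu$, and choose $\varphi\in C(\partial\Omega)$ vanishing on the parabolic boundary $\partial_p\Omega=\bigl([-1,1]\times\{0\}\bigr)\cup\bigl(\{\pm1\}\times[0,1]\bigr)$ with $\varphi(0,1)=1$. If caloric polynomials $p_k$ converged to $\varphi$ uniformly on $\partial\Omega$, then $\sup_{\partial_p\Omega}|p_k|\to0$, and the caloric maximum principle would force $\sup_{\overline\Omega}|p_k|\to0$, contradicting $p_k(0,1)\to1$. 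Thus $\mathscr R$, although a closed linear subspace of $C(\partial\Omega)$ containing every caloric polynomial trace (that part of your argument is fine), need not exhaust $C(\partial\Omega)$ via this route. The classical Wiener argument does not use caloric boundary data: given $g\in C^2$ on a neighbourhood of $\overline\Omega$, write $g=(Ct)-(Ct-g)$ with $C\ge\sup_{\overline\Omega}|Hg|$, so that both $Ct$ and $Ct-g$ are supercaloric (since $H(Ct)=-C\le0$ and $H(Ct-g)=-C-Hg\le0$) and continuous up to $\overline\Omega$; one then proves resolutivity of continuous supercaloric boundary data --- this is the step where the bowl regularization of Theorem~\ref{A2.2} is really used --- and exploits subadditivity of $\varphi\mapsto\overline H^\Omega_\varphi$, superadditivity of $\varphi\mapsto\underline H^\Omega_\varphi$, together with the closedness of $\mathscr R$ and Stone--Weierstrass, to conclude $\mathscr R=C(\partial\Omega)$.
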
 
This is the caloric version of the celebrated Perron--Wiener Theorem for the harmonic functions. It can be proved with a standard procedure in which the caloric Perron-type regularization plays the crucial r\^ole.

\section*{Acknowledgment}
The first author  has been partially supported by the Gruppo Nazionale per l'Analisi Matematica, la Probabilit\`a e le
loro Applicazioni (GNAMPA) of the Istituto Nazionale di Alta Matematica (INdAM).

\bibliographystyle{alpha} 

\end{document}